\def\mat#1{\ensuremath{#1}\xspace}
\def\makedef#1#2{\expandafter\gdef\csname #1\endcsname{#2}}
\def\makelet#1#2{\expandafter\let\csname #1\expandafter\endcsname\csname #2\endcsname}
\def\defmath#1{\makelet{temp@#1}{#1}\makedef{#1}{\mat{\csname temp@#1\endcsname}}}
\def\defbb#1{\makedef{c#1}{\mat{\mathbb{#1}}}}		% usage \cA
\def\defcal#1{\makedef{l#1}{\mat{\mathcal{#1}}}}	% usage \lA
\def\bbcal#1{\defbb{#1}\defcal{#1}}
\def\al{\mat{\alpha}}
\def\be{\mat{\beta}}
\def\hi{\mat{\chi}}
\def\La{\mat{\Lambda}}
\def\om{\mat{\omega}}
\def\vi{\mat{\varphi}}
\def\te{\mat{\theta}}
\def\mrm@#1{\mat{\mathrm{#1}}}
\def\deffrak#1{\makedef{g#1}{\mat{\mathfrak{#1}}}} % usage \ga,\gA
\def\DMO{\DeclareMathOperator}
\DMO{\Hom}{Hom}
\DMO{\RHom}{RHom}
\DMO{\lHom}{\lH\mathit{om}}
\DMO{\Ext}{Ext}
\DMO{\lExt}{\lE\mathit{xt}}
\DMO{\End}{End}
\DMO{\Aut}{Aut}
\DMO{\Fun}{Fun}
\DMO{\Tor}{Tor}
\DMO{\ext}{ext}
\DMO{\Ob}{Ob}
\DMO{\Mor}{Mor}
\DMO{\im}{im}
\DMO{\coim}{coim}
\DMO{\coker}{coker}
\DMO{\Arr}{Arr}
\DMO{\Id}{Id}
\DMO{\id}{id}
\DMO{\add}{add} % splitting of idempotents (karoubinization)
\DMO{\ind}{ind} % category of ind-objects
\DMO{\pro}{pro} % category of pro-objects
\DMO{\Map}{Map} %
\DMO{\Iso}{Iso} %
\DMO{\Isom}{Isom}%
\DMO{\Ind}{Ind}
\DMO{\Bun}{Bun}
\DMO{\Higgs}{Higgs}
\DMO{\Hitch}{Hitch}
\DMO{\Loc}{Loc}
\DMO{\Maps}{Maps}
\DMO{\Presh}{Presh}
\DMO\coalg{Coalg}
\DMO{\Rep}{Rep}
\DMO{\Cor}{Cor}
\DMO{\Mod}{Mod}
\DMO{\rad}{rad}
\DMO{\soc}{soc}
\DMO{\ann}{ann}
\DMO{\pd}{pd}
\DMO{\Spec}{Spec}
\DMO{\Specm}{Specm}
\DMO{\Max}{Max}
\DMO{\spec}{Spec}
\DMO{\Proj}{Proj}
\DMO{\supp}{supp}
\DMO{\Coh}{Coh}
\DMO{\coh}{coh}
\DMO{\Qcoh}{QCoh}
\DMO{\QCoh}{QCoh}
\DMO{\Pic}{Pic}
\DMO{\Div}{Div}
\DMO{\ch}{ch}
\DMO{\Hilb}{Hilb}
\DMO{\Fitt}{Fitt}
\DMO{\Quot}{Quot}
\def\Gm{\mat{{\mathbb G}_{\mathrm m}}}
\DMO{\Gras}{Gr}
\DMO{\Grass}{Gr}
\DMO{\Flag}{Flag}
\DMO{\Jac}{Jac}
\DMO{\cone}{cone}
\DMO{\Tw}{Tw}
\DMO{\rank}{rk}
\DMO{\rk}{rk}
\DMO{\codim}{codim}
\DMO{\cov}{cov}
\DMO{\sgn}{sgn}
\DMO{\td}{td}
\DMO{\GL}{GL}
\DMO{\PGL}{PGL}
\DMO{\SL}{SL}
\DMO{\SO}{SO}
\DMO\Der{Der}
\DMO\der{Der}
\DMO\coder{Coder}
\DMO{\diag}{diag}
\DMO{\HMod}{HMod} %the homotopy category of modules over DGC
\DMO{\ad}{ad}
\DMO{\Ad}{Ad}
\DMO*{\colim}{colim}
\DMO*{\hocolim}{hocolim}
\DMO*{\holim}{holim}
\DMO{\Ho}{Ho}
\DMO{\har}{char}
\DMO{\sk}{sk}
\DMO{\cosk}{cosk}
\DMO{\Gal}{Gal}
\DMO{\tr}{tr}
\DMO{\Tr}{Tr}
\DMO{\Sh}{Sh}
\DMO{\Is}{Is} %Isometries
\DMO{\Hol}{Hol} %Holomorphic automorphisms
\DMO{\Lie}{Lie} %Lie algebra of a group
\DMO{\Res}{Res} %restriction, residue
\DMO{\irr}{irr} %
\DMO{\Irr}{Irr} %
\DMO{\Exp}{Exp} %
\DMO{\Log}{Log} %
\DMO{\Pow}{Pow}
\DMO{\pow}{pow}
\DMO{\mult}{mult} %
\DMO{\height}{ht} %
\DMO{\wt}{wt}
\DMO{\Vect}{Vect}
\DMO{\moda}{mod}
\DMO{\hd}{hd} %homological dimension
\DMO{\face}{face}
\DMO{\Sym}{Sym}
\DMO{\Com}{Com} % operad Com
\DMO{\Eig}{Eig} 
\DMO{\cl}{cl} 
\DMO{\Li}{Li} 
\DMO{\Imxx}{Im} 
\DMO{\Rexx}{Re}
\def\n#1{\mat{\lvert#1\rvert}}
\def\dd{\mat{\partial}}
\def\iso{\simeq}
\def\tl#1{\mat{\tilde{#1}}}
\def\what#1{\mat{\widehat{#1}}}
\def\sb{\subset}
\def\xx{\times}
\def\ms{\backslash} %minus set
\def\pser#1{[\![#1]\!]} %formal power series [[#1]]
\def\half{\mat{\frac12}}
\def\oh{\half} %one half
\def\inv{^{-1}}
\def\st{{\rm{s}}}
\def\sst{{\rm{ss}}}
\def\ang#1{\mat{\left\langle #1\right\rangle}}
\def\set#1{\mat{\{ #1\}}}
\def\sets#1#2{\mat{\{ #1 \mid #2\}}}
\def\emb{\hookrightarrow}
\def\mto{\mapsto}
\def\arr{\futurelet\test\arrtest}
\def\arrtest{\ifx^\test\let\next\arra\else\let\next\arrb\fi\next}
\def\arra^#1{\xrightarrow{#1}} \def\arrb{\to}
\def\arrowsD{
\def\mto{{\:\vrule height .9ex depth -.2ex width .04em\!\!\!\;\ar}}
\def\ar{{\:\vrule depth -.52ex height .60ex width 0.85em\;\!\!\rhla\,}}
\def\arr{\futurelet\test\arrtest}
\def\arrtest{\ifx^\test\let\next\arra\else\let\next\arrb\fi\next}
\def\arra^##1{\rTo^{##1}} \def\arrb{\ar}
\def\emb{\futurelet\test\embtest}
\def\embtest{\ifx^\test\let\next\emba\else\let\next\embb\fi\next}
\def\emba^##1{\rInto^{##1}} \def\embb{{\:\rthooka\!\!\!\ar}}
\newarrow{Eq}=====
\def\rrarr{\pile{\rTo\\ \rTo}}
\def\lrarr{\pile{\rTo\\ \lTo}}   
\newarrow{ShortTo}{}{}-->
}
\def\arrowsDStandard{
\newarrow{TeXto}----{->}
\newarrow{TeXinto}C---{->}
\newarrow{TeXonto}----{->>}
\newarrow{TeXdashto}{}{dash}{}{dash}{->}
\newarrow{Eq}=====
\def\ar{\rightarrow}
\def\emb{\futurelet\test\embtest}
\def\embtest{\ifx^\test\let\next\emba\else\let\next\embb\fi\next}
\def\emba^##1{\rTeXinto^{##1}} \def\embb{\hookrightarrow}
\def\rrarr{\pile{\rTo\\ \rTo}}
\def\lrarr{\pile{\rTo\\ \lTo}}   
}
\def\theorems{
\newcounter{nthr} %auxiliary counter
\numberwithin{nthr}{section}
\let\theHnthr\thenthr
%solves a problem with hyperref, Heiko <oberdiek@ruf.uni-freiburg.de>
%a different solution: option hypertexnames=false in hyperref
\newtheorem{thr}[nthr]{Theorem}
\newtheorem{prp}[nthr]{Proposition}
\newtheorem{lmm}[nthr]{Lemma}
\newtheorem{crl}[nthr]{Corollary}

\newtheorem{conj}{Conjecture}
\newtheorem{rmr}[nthr]{Remark}
\theoremstyle{definition}
\newtheorem{dfn}[nthr]{Definition}

\theoremstyle{remark}
}
\newif\ifrem\remtrue
\def\br{\linebreak}
\def\lb#1{\mat{\underline{#1}}} %lower bar
\def\over#1#2{\mat{\substack{#1\\#2}}} % #2 over #1
\def\ie{i.e.\ }
\def\eg{e.g.\ }
\def\eprint#1#2{%
%called from a bbl file, #1 - archive name, #2 - preprint id
\expandafter\ifx\csname eprint@#1\endcsname\relax#1:#2%
\else\def\itemID{#2}\csname eprint@#1\endcsname\fi}
\def\defArchive#1#2{%
%#1 - archive name, #2 - code using \itemID for the preprint id
% archive name should be in lower case
\makedef{eprint@#1}{#2}}
\begin{document}
\title[]{On the motivic Donaldson-Thomas invariants of quivers with potentials}%
\author{Sergey Mozgovoy}%
\email{mozgovoy@maths.ox.ac.uk}%
\begin{abstract}
We study motivic Donaldson-Thomas invariants for a class of quivers with potentials using the strategy of Behrend, Bryan, and Szendr\H oi \cite{behrend_motivic}. This class includes quivers with potentials arising from consistent brane tilings and quivers with zero potential. Our construction is an alternative to the constructions of Kontsevich and Soibelman \cite{kontsevich_stability,kontsevich_cohomological}. We construct an integration map from the equivariant Hall algebra to the quantum torus and show that our motivic Donaldson-Thomas invariants are images of the natural elements in the equivariant Hall algebra. We show that the inegration map is an algebra homomorphism and use this fact to prove the Harder-Narasimhan relation for the motivic Donaldson-Thomas invariants.
\end{abstract}
\maketitle
\tableofcontents

\section{Introduction}
The goal of this paper is to study the motivic Donaldson-Thomas invariants for some class of quivers with (polynomial) potentials $(Q,W)$ using the approach of Behrend, Bryan, and Szendr\H oi \cite{behrend_motivic}. These invariants are constructed using the motivic vanishing cycles of functions on smooth moduli spaces of stable quiver representations. The function $w:M^\st_{\te}(Q,\al)\to\cC$ in question is the trace of the potential. It was proved in \cite{behrend_motivic} that if $w$ is equivariant with respect to an appropriate torus action, then the motivic vanishing cycle of $w$ can be computed as
$$[\vi_w]=[w\inv(1)]-[w\inv(0)].$$

We will show that under certain conditions on the potential, we can introduce a weight function on the arrows so that the corresponding torus action on the moduli space will satisfy all the required conditions. Therefore the above equation will hold in this situation. Using the right hand side of this equation for the definition of the motivic Donaldson-Thomas invariants and organizing these invariants as elements of the quantum torus, we will show that they can be obtained as images of some natural elements of the equivariant Hall algebra of the quiver $Q$ with respect to an algebra homomorphism (called an integration map) from the equivariant Hall algebra to the quantum torus. Our integration map is closely related to the integration map of Reineke \cite{reineke_harder-narasimhan} from the whole Hall algebra of $Q$ to the quantum torus. These maps coincide in the case of a trivial potential (see \cite{kontsevich_cohomological,mozgovoy_motivic} on the discussion of the motivic Donaldson-Thomas invariants in this case). In fact, the integration map of Reineke is an important ingredient in our construction.

We should stress, that our construction is quite different from the construction of Kontsevich and Soibelman \cite{kontsevich_stability}, where an integration map from the Hall algebra of the category of modules over the Jacobian algebra to the quantum torus was defined. Our approach is probably less natural, because we use the Hall algebra of the category of quiver representations in order to define some invariants of the moduli spaces of modules over the Jacobian algebra. But it does the job -- all the constructions are quite elementary, the algebra homomorphism property of the integration map is almost obvious, and the relations in the equivariant Hall algebra of the quiver (\eg the Harder-Narasimhan relation) can be translated to the relations in the quantum torus, thus giving us relations between motivic Donaldson-Thomas invariants for different stability parameters. In the last part of the paper we will see how our constructions can be generalized to arbitrary potentials, at least over finite fields.

While preparing this paper I was informed by Kentaro Nagao on his related work \cite{nagao_motivic} on the extension of the approach from \cite{behrend_motivic} to more general quivers with potentials.

I would like to thank Tamas Hausel and Markus Reineke for many useful discussions.
The author's research was supported by the EPSRC grant EP/G027110/1.

\def\Ieq{I_{\rm eq}}
\def\eq{H_{\rm eq}}
\def\heq{\what H_{\rm eq}}
\section{Preliminaries}
\subsection{Bilinear forms related to quivers}
\label{bilin}
Let $Q=(Q_0,Q_1)$ be a quiver. We define the Euler-Ringel form to be the bilinear form on $\cZ^{Q_0}$ given by
$$\hi(\al,\be)=\sum_{i\in Q_0}\al_i\be_i-\sum_{a:i\to j}\al_i\be_j,\qquad \al,\be\in\cZ^{Q_0}.$$
We define the skew-symmetric form
$$\ang{\al,\be}=\hi(\al,\be)-\hi(\be,\al)\qquad \al,\be\in\cZ^{Q_0}.$$
We define the Tits form $T(\al)=\hi(\al,\al)$, $\al\in\cZ^{Q_0}$.
 
\subsection{Representations of quivers with potentials}
Let $(Q,W)$ be a quiver with potential and let $A_W=kQ/(\dd W)$ be the corresponding Jacobian algebra over a field $k$.
A representation $M$ of $Q$ over a field $k$ can be represented as
$$M=((M_i)_{i\in Q_0},(M_a)_{a\in Q_1}),$$
where $M_i$ are $k$-vector spaces and $M_a:M_{s(a)}\to M_{t(a)}$ are linear maps (for any arrow $a\in Q_1$, we denote its source by $s(a)$ and denote its target by $t(a)$).
Let $W=\sum a_uu$, where the sum runs over a finite number of cycles $u$ in $Q$. We define
$$w(M)=\sum a_u \tr(M_u),$$
where for any path $u=a_1\dots a_n$, we define $M_u=M_{a_1}\dots M_{a_n}$.
Note that for any exact sequence of representations of $Q$
$$0\to N\to X\to M\to 0$$
we have $w(X)=w(M)+w(N)$. Therefore, we get a map 
$$w:K_0(\Rep(Q,k))\to k.$$

For any $\al\in\cZ^{Q_0}$ we define the space of representations
$$R(Q,\al)=\bigoplus_{a:i\to j}\Hom(k^{\al_i},k^{\al_j}),$$
where the sum runs over all arrows of $Q$. There is a map 
$$w:R(Q,\al)\to k,\qquad M\mto w(M),$$
which is invariant under the action of $\GL_\al(k)=\prod_{i\in Q_0}\GL_{\al_i}(k)$ on $R(Q,\al)$ by conjugation.
The following result is well-known
\begin{lmm}
A representation $M\in R(\al,\cC)$ is in the degeneracy locus of $w$ (\ie $dw(M)=0$) if and only if $M$ is a representation of the Jacobian algebra $A_W$.
\end{lmm}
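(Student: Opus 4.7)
The plan is to compute the differential $dw$ pointwise and identify its vanishing locus with the simultaneous vanishing of the cyclic (non-commutative) derivatives of $W$ evaluated on $M$. Since $R(Q,\al)$ is an affine vector space, its tangent space at $M$ is $R(Q,\al)$ itself, and a tangent vector is a tuple $(\delta M_a)_{a\in Q_1}$ with $\delta M_a\in\Hom(k^{\al_{s(a)}},k^{\al_{t(a)}})$. To read off $dw(M)$, I would compute the partial derivative of $w=\sum a_u\tr(M_u)$ in the direction of a single arrow $a$, \ie let only $\delta M_a$ be nonzero.

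For a cycle $u=a_1\cdots a_n$ appearing in $W$ with coefficient $a_u$, each position $k$ with $a_k=a$ contributes
$$\tr\bigl(M_{a_1}\cdots M_{a_{k-1}}\cdot\delta M_a\cdot M_{a_{k+1}}\cdots M_{a_n}\bigr)$$
to the first-order variation of $\tr(M_u)$. Cyclicity of the trace rewrites this as $\tr\bigl(\delta M_a\cdot M_{a_{k+1}}\cdots M_{a_n}M_{a_1}\cdots M_{a_{k-1}}\bigr)$. Summing over all such positions $k$ and all cycles $u$ in $W$, the calculation yields
$$dw(M)(\delta M_a)=\tr\bigl(\delta M_a\cdot(\dd_a W)(M)\bigr),$$
where $\dd_a W=\sum_u a_u\sum_{k:\,a_k=a}a_{k+1}\cdots a_n a_1\cdots a_{k-1}$ is the standard cyclic derivative of $W$ with respect to the arrow $a$, and $(\dd_a W)(M)\in\Hom(k^{\al_{t(a)}},k^{\al_{s(a)}})$ denotes its evaluation on the representation $M$.

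Since the trace pairing $(X,Y)\mto\tr(XY)$ between $\Hom(k^{\al_{s(a)}},k^{\al_{t(a)}})$ and $\Hom(k^{\al_{t(a)}},k^{\al_{s(a)}})$ is non-degenerate, the condition $dw(M)=0$ is equivalent to $(\dd_a W)(M)=0$ for every $a\in Q_1$. This in turn is exactly the condition that $M$ satisfies all the relations generating the ideal $(\dd W)$, \ie that $M$ descends to a module over the Jacobian algebra $A_W=kQ/(\dd W)$, proving the lemma.

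There is no serious obstacle here: the calculation is elementary and standard. The one place where some care is needed is the bookkeeping when a single arrow $a$ occurs several times in a single cycle $u$, which is handled cleanly by summing over the set $\{k : a_k = a\}$; the other tacit ingredient is the non-degeneracy of the trace pairing between matrix spaces, which is standard.
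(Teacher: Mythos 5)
The paper offers no proof here: it simply states the lemma is ``well-known.'' Your argument is correct and is exactly the standard one. The computation $dw(M)(\delta M_a)=\tr\bigl(\delta M_a\cdot(\dd_a W)(M)\bigr)$ via cyclicity of trace, together with non-degeneracy of the trace pairing between $\Hom(k^{\al_{s(a)}},k^{\al_{t(a)}})$ and $\Hom(k^{\al_{t(a)}},k^{\al_{s(a)}})$, is precisely how one identifies the critical locus of $w$ with the locus where all cyclic derivatives $\dd_a W$ vanish, which is in turn the condition for $M$ to factor through $A_W=kQ/(\dd W)$. The bookkeeping you flag (summing over all positions $k$ with $a_k=a$) is indeed the only place that needs care, and you handle it correctly.
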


\subsection{Moduli spaces}
Let $k=\cC$ in this section.
Let $\te\in\cR^{Q_0}$ be some fixed vector. For any $\al\in\cN^{Q_0}\ms\set0$, we define
$$\mu_\te(\al)=\frac{\te\cdot\al}{\sum\al_i}.$$
For any $Q$-representation $M$, we define $\mu_\te(M)=\mu_\te(\lb\dim M)$, where $\lb\dim M=(\dim M_i)_{i\in Q_0}\in\cN^{Q_0}$ is the dimension vector of $M$.
We say that a representation $M$ is semistable (resp.\ stable) if for any $0\ne N\subsetneq M$ we have $\mu_\te(N)\le\mu_\te(M)$ (resp.\ $\mu_\te(N)<\mu_\te(M)$).

Let $M^\sst_\te(Q,\al)$ (resp.\ $M^\st_\te(Q,\al)$) be the moduli space of \te-semistable (resp.\ stable) representations of $Q$ having dimension vector \al.
We denote by $\lM^\sst_\te(Q,\al)$ the stack of \te-semistable representations of $Q$ of dimension \al. The potential map $w:R(Q,\al)\to\cC$ descends to $w:M_\te^\sst(Q,\al)\to\cC$ and $\tl w:\lM_\te^\sst(Q,\al)\to\cC$. Its degeneracy locus on $w:M_\te^\st(Q,\al)\to\cC$ coincides with the moduli space $M_\te^\st(A_W,\al)$ of \te-stable $A_W$-modules having dimension vector \al.

We say that $\te$ is \al-generic if for any $0<\be<\al$ we have $\mu_\te(\be)\ne\mu_\te(\al)$. Then any semistable $Q$-representation of dimension \al is automatically stable.

\subsection{Weights}
Given a map $\wt:Q_1\to\cN$, we define, for any path $u=a_1\dots a_n$,
$\wt(u)=\sum \wt a_i$. We extend the weight function also to $\wt:\cZ^{Q_1}\to\cZ$ by linearity. 

\begin{rmr}
Throughout the paper we will make an assumption that the weight function is positive on all cycles, and moreover $\wt(u)$ is constant on all cycles $u$ having nonzero coefficient in $W$ (we say that $W$ is homogeneous with respect to the weight function and denote the weight of its cycles by $\wt(W)$).
\end{rmr}

\begin{rmr}
Such choice of weight function is always possible for quivers with potentials arising from consistent brane tilings \cite{mozgovoy_noncommutative}. For example, we can choose a weight function corresponding to the perfect matching of the associated bipartite graph (define the weight of an arrow to be equal $1$ if it is in the perfect matching and zero otherwise).
\end{rmr}

\begin{rmr}
\label{primitive condition}
Let $Q_2$ be the set of cycles having nonzero coefficients in $W$. Consider the maps
$\cZ^{Q_2}\to \cZ^{Q_1}$ (sending every cycle to its content) and $\cZ^{Q_2}\to\cZ$ sending every cycle to $1$. We form a cocartesian diagram
\begin{diagram}
\cZ^{Q_2}&\rTo&\cZ^{Q_1}\\
\dTo&&\dTo\\
\cZ&\rTo^\om&\La
\end{diagram}
Our assumption that $W$ is homogeneous with respect to $\wt$ means that $\wt:\cZ^{Q_1}\to\cZ$ can be uniquely factored through $\cZ^{Q_1}\to\La$ so that the composition $\cZ\arr^\om\La\to\cZ$ is the multiplication by $\wt W$.
Dualizing, we get a map of tori $T_\La=\Hom(\La,\cC^*)\to(\cC^*)^{Q_1}$, which induces an action of $T_\La$ on the moduli spaces of $Q$-representations. 

The map
$w:M_\te^\sst(Q,\al)\to\cC$ is $T_\La$-equivariant, where the action on $\cC$ is given by the character $\hi_\om:T_\La\to\cC$ induced by $\om$. In order to apply the results of \cite{behrend_motivic}, we need the character $\hi_\om$ to be primitive. Therefore, we require the map $\om:\cZ\to\La$ to be a split monomorphism. It was shown in \cite{mozgovoy_noncommutative} that this is always the case for potentials arising from consistent brane tilings. Note that the map $\wt:\cZ^{Q_1}\to\cZ$ can be factored through $\wt:\La\to\cZ$ with a composition $\wt\circ\om:\cZ\to\cZ$ being multiplication by $\wt W$. 
\end{rmr}

For any representation $M$ and an element $t\in\Gm$, we define a new representation $tM$ as follows 
$$(tM)_i=M_i,\quad i\in Q_0,\quad (tM)_a=t^{\wt(a)}M_a,\quad a\in Q_1.$$
Then
$$w(tM)=t^{\wt W}w(M).$$
If $M$ is a representation of the jacobian algebra $A_W$ then so also is $tM$.

\section{Framed quiver representations}
The goal of this section is to show that, at least for framed quiver representations, we can define the circle-compact action of $\cC^*$ on the corresponding moduli spaces (the action of $\cC^*$ on a variety $X$ is called circle compact if $X^{\cC^*}$ is compact and there exists the limit $\lim_{t\to0}tx$ for any $x\in X$). As is mentioned in \cite{behrend_motivic}, we don't actually need the compactness of the $\cC^*$-invariant part in order to apply \cite[Prop.~1.11]{behrend_motivic}. Nevertheless we prove our compactness result for completeness.

Let $Q'$ be a new quiver obtained from $Q$ by adding a new vertex $*$ and some arrows from $*$ to $Q_0$ and from $Q_0$ to $*$.
We will study moduli spaces of $Q'$-representations having dimension vector $\al'=(\al,1)$, where $\al\in\cN^{Q_0}$. We call such representations framed $Q$-representations. Let us choose a stability parameter $\te'=(\te,\te_*)\in\cR^{Q_0}\xx\cR$.
We extend the weight function $\wt:Q_1\to\cN$ to a function $\wt:Q'_1\to\cN$ by arbitrary positive integer values. It defines an action of $\cC^*$ on the moduli space $M_{\te'}^\sst(Q',\al')$.

\begin{thr}
Let $\te'\in\cR^{Q'_0}$ and let $\al'=(\al,1)\in\cN^{Q'_0}$.
Assume that $\te'$ is $\al'$-generic. Then the subvariety of $\Gm$-invariant points of $M_{\te'}^\sst(Q',\al')$ is a projective variety.
\end{thr}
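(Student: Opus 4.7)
The plan is to realize the fixed subvariety as a closed subscheme of a projective fiber. Concretely, I would construct a $\Gm$-equivariant projective morphism $\pi\colon M_{\te'}^\sst(Q',\al')\to\cX$ to an affine variety $\cX$ on which $\Gm$ acts with a unique fixed point, and then observe that the fixed locus on the source must map into that single point. Since a closed subscheme of a projective variety is projective, this would suffice.

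The natural candidate for $\pi$ is the affinization morphism associated to King's GIT construction. We have $M_{\te'}^\sst(Q',\al')=R(Q',\al')^\sst\GIT\GL_{\al'}$ for a character $\chi_{\te'}$ determined by $\te'$ and $\al'$, and this quotient is by construction $\Proj$ of the graded ring of $\chi_{\te'}^n$-semi-invariants. It is therefore projective over $\cX:=\Spec\cC[R(Q',\al')]^{\GL_{\al'}}$, and the induced morphism $\pi$ is $\Gm$-equivariant with respect to the action $(tM)_a=t^{\wt(a)}M_a$ on both sides, since this linear action commutes with the $\GL_{\al'}$-action.

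The key ingredient is the theorem of Le Bruyn--Procesi: $\cC[R(Q',\al')]^{\GL_{\al'}}$ is generated as a $\cC$-algebra by the trace functions $\tr(M_u)$ over cycles $u$ in $Q'$, and the $\Gm$-weight of the generator $\tr(M_u)$ is precisely $\wt(u)$. Every cycle in $Q'$ has strictly positive weight: cycles lying inside $Q$ are positively weighted by our standing assumption on $\wt\colon Q_1\to\cN$, while a cycle passing through $*$ must traverse at least one arrow $*\to i$ and one arrow $j\to *$, both of positive weight by our extension of $\wt$ to $Q'_1$. Hence $\cC[\cX]$ is positively graded with $\cC[\cX]_0=\cC$, which exhibits $\cX$ as an affine cone with a unique $\Gm$-fixed point at the vertex $0$. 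The $\Gm$-fixed locus of $M_{\te'}^\sst(Q',\al')$ maps into $\{0\}$ and is therefore a closed subscheme of the projective fiber $\pi\inv(0)$, so it is projective.

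The main technical point to be careful about is the projectivity of the affinization morphism $\pi$, which is a standard consequence of the $\Proj$-construction in GIT but deserves a precise reference. The $\al'$-genericity of $\te'$ does not enter the argument above directly; it is used only to ensure that $M_{\te'}^\sst(Q',\al')$ is a fine moduli space and an honest variety, so that "subvariety of $\Gm$-invariant points" has its naive meaning.
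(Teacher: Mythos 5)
Your proof is correct, but it takes a genuinely different route from the paper. The paper follows Reineke's localization strategy: it analyzes a $\Gm$-fixed stable representation $M$ directly, decomposes each $M_i$ into $\Gm$-weight spaces, uses stability to bound the weights occurring (no gaps of length $\ge m$, else $M$ would split), and thereby identifies the fixed locus with a moduli space $M^\sst_{\what\te}(\what Q,\what\al)$ for an explicitly constructed quiver $\what Q$; projectivity follows because $\what Q$ is acyclic (any cycle in $\what Q$ would project to a positively weighted cycle in $Q'$, yet arrows of $\what Q$ strictly increase the second coordinate $n$). Your proof instead works at the level of the GIT picture: the affinization morphism $\pi\colon M_{\te'}^\sst(Q',\al')\to\Spec\cC[R(Q',\al')]^{\GL_{\al'}}$ is projective and $\Gm$-equivariant, Le Bruyn--Procesi gives that the target has coordinate ring generated by traces of cycles, the positivity of cycle weights in $Q'$ forces this ring to be positively graded with degree-zero part $\cC$, and hence the fixed locus lands in the single fiber $\pi\inv(0)$, which is projective. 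Both arguments hinge on the same key input (positivity of weights on all cycles of $Q'$), but the payoffs differ: the paper's localization argument is more work yet yields an explicit model of the fixed locus as a moduli space of an acyclic quiver, which is what one actually wants for computations; your argument is shorter, cleaner, and notably does not really use $\al'$-genericity (the genericity is only needed to make the fixed locus smooth and the moduli space fine, not to obtain projectivity). It is worth noting that the paper already uses the proper affinization map $\pi$ and King's reference for it in Lemma~3.2 (to show limits $\lim_{t\to0}tM$ exist), so your approach unifies the proofs of the theorem and that lemma around a single observation about the positively graded affine base.
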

\begin{proof}
We follow the strategy of \cite{reineke_localization}.
Consider some $\Gm$-invariant point in\br
$M_{\te'}^\sst(Q',\al')$. It is automatically stable and is represented by some $M\in R(Q',\al')$. There is a natural action of the group $\GL_\al=\prod_{i\in Q_0}\GL_{\al_i}$ on $R(Q',\al')$ by conjugation. By our assumption for any $t\in\Gm$ there exists some $g=(g_i)_{i\in Q_0}$ such that for any arrow $a:i\to j$ in $Q'$
$$(tM)_a=g_j M_a g_i\inv,$$
where we define $g_*=1$. Let $H\sb \Gm\xx\GL_\al$ be the subgroup of all elements $(t,g)$ satisfying this condition. Then $p_1:H\to\Gm$ is surjective. But its kernel is trivial. Indeed, if $(1,g)$ is in the kernel, then $g$ is an automorphism of $M$ (with $g_*=1$). It follows from the stability of $M$, that $g=1$. Consider the composition
$$\psi:p_2\circ p_1:\Gm\to\GL_\al$$
and split it to components $\psi_i:\Gm\to\GL(M_i)$, $i\in Q_0$. We can decompose $M_i$ with respect to the character group $\cZ$ of \Gm
$$M_i=\bigoplus_{n\in\cZ}M_{i,n}, \quad i\in Q_0.$$
We also decompose $M_*=M_{*,0}$. 
One can see that for any arrow $a:i\to j$ we have
$$M_a(M_{i,n})\sb M_{j,n+\wt a}.$$
One can show that conversely, the existence of such grading on $M$ implies that $M$ is fixed by the action of $\Gm$.

We can find some boundary $N$ such that $M_{i,n}=0$
for $i\in Q'_0$, $\n n>N$. For example we can take 
$$N=\n\al m,\qquad \n\al=\sum_{i\in Q'_0}\al_i=\dim M,\quad m=\max_{a\in Q'_1}\wt a.$$
Indeed, if there exists say $n_0>N$ such that $M_{i,n_0}\ne0$ for some $i\in Q'_0$, then there exists $0\le k<k+m<n_0$ such that $M_{i,n}=0$ for all $i\in Q'_0$, $k+1\le n\le k+m$. But then $M$ is a direct sum of two submodules 
$$\bigoplus_{i\in Q_0,n\le k} M_{i,n},\qquad \bigoplus_{i\in Q_0,n>k+m} M_{i,n},$$
where the first submodule is nonzero because $M_{*,0}\ne0$. This contradicts to the stability of $M$.

The grading of $M$ allows us to construct a representation $\what M$ of the following quiver $\what Q$. Its vertices are pairs
$$(i,n),\quad i\in Q'_0,\quad -N\le n\le N$$
and its arrows are pairs 
$$(a,n):(s(a),n)\to(t(a),n+\wt(a)),\quad a\in Q'_1,\quad -N\le n\le n+\wt a\le N.$$
The dimension vector of $\what M$ equals $\what\al\in \cZ^{\what Q_1}$ given by $\what\al_{i,n}=\dim M_{i,n}$. 

It is clear that $\what M$ is stable with respect to the stability condition $\what \te$ defined by $\what\te_{i,n}=\te'_i$. Conversely, given a $\what\te$-stable representation $\what M$ of quiver $\what Q$ of dimension $\what\al$, we can construct a representation $M$ of quiver $Q'$ of dimension $\al'$ which is fixed by $\Gm$. We claim, that $M$ is $\te'$-stable (or, equivalently, $\te'$-semistable). If this is wrong then there exists a destabilizing semistable submodule $N\sb M$ coming from the Harder-Narasimhan filtration. It follows from the uniqueness of the Harder-Narasimhan filtration that $N$ is actually $\Gm$-invariant. Let $U\sb N$ be some stable submodule. For any $t\in\Gm$, the stable representation $tU$ is a submodule of $tN\iso N$. It follows that $tU, t\in\Gm$, form a direct sum in $N$ (because they are simple objects in the category of semistable modules having the same slope as $N$). As $N$ is finite dimensional, the orbit $tU,t\in\Gm$, in the moduli space of stable modules should be finite. But it is an image of a connected group $\Gm$, so the orbit consists of just one element. This implies that $U$ is $\Gm$-invariant. The same argument as earlier for the module $M$ shows that $U$ can be considered as a representation $\what U$ of the quiver $\what Q$. Therefore $\what U$ is a destabilizing submodule of $\what M$ and this contradicts to the stability of $\what M$.

We have shown that the subvariety of $\Gm$-invariant points of $M_{\te'}^\sst(Q',\al')$ can be identified with the moduli space $M^\sst_{\what\te}(\what Q,\what\al)$ (note that $\what\te$ is $\what\al$-generic). Note that the quiver $\what Q$ is acyclic, because any cycle in $\what Q$ would project to a cycle in $Q'$ and all cycles in $Q'$ have positive weights by our assumptions. This implies that $M^\sst_{\what\te}(\what Q,\what\al)$ is projective and the theorem is proved.
\end{proof}

\begin{lmm}
\label{limit}
For any $M\in M^\sst_{\te}(Q,\al)$, there exists the limit $\lim_{t\to0}tM$.
%\in M^\sst_{\te'}(Q',\al')$.
\end{lmm}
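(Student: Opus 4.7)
The plan is to deduce the existence of the limit in $M^\sst_\te(Q,\al)$ from the elementary existence of a limit in the affine GIT quotient $\Spec(\lO(R(Q,\al))^{\GL_\al})$, by invoking properness of the natural morphism between the two.

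Since $\wt$ takes values in $\cN$, the $\Gm$-action on $R(Q,\al)$ has only non-negative weights on the coordinate ring. Because this action commutes with the $\GL_\al$-action by conjugation, the invariant subring $A:=\lO(R(Q,\al))^{\GL_\al}$ inherits an $\cN$-grading $A=\bigoplus_{n\ge 0}A_n$. Hence the affine moduli $\Spec(A)$ carries a $\Gm$-action for which every trajectory admits a limit as $t\to 0$, namely the image under the retraction $\Spec(A)\to\Spec(A_0)\hookrightarrow\Spec(A)$ induced by the inclusion $A_0\hookrightarrow A$.

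On the other hand, standard GIT for the $\te$-linearized action of the reductive group $\GL_\al$ on the affine variety $R(Q,\al)$ provides a $\Gm$-equivariant, projective (hence proper) morphism
$$\pi_0\colon M^\sst_\te(Q,\al)\to\Spec(A),$$
realizing the source as $\Proj$ of the graded ring of $\te^n$-semi-invariants over $\Spec(A)$. Given $[M]\in M^\sst_\te(Q,\al)$, set $\phi\colon\Gm\to M^\sst_\te(Q,\al)$, $t\mapsto[tM]$; then $\pi_0\circ\phi$ extends to a morphism $\cA^1\to\Spec(A)$ by the previous paragraph. Applying the valuative criterion of properness for $\pi_0$ at the DVR $\lO_{\cA^1,0}$ produces a unique extension $\bar\phi\colon\cA^1\to M^\sst_\te(Q,\al)$ of $\phi$; the value $\bar\phi(0)$ is the desired limit $\lim_{t\to 0}tM$.

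The key subtlety the plan finesses is that one cannot simply take the pointwise limit on representatives in $R(Q,\al)$: even for the Kronecker quiver, the naive candidate $(M_0)_a=M_a$ when $\wt(a)=0$ and $(M_0)_a=0$ otherwise can fail to be semistable, while the orbit in the moduli space has a perfectly good limit. Replacing the representative-level analysis with the projective morphism $\pi_0$ is what resolves this difficulty; the main technical verification is simply that $\pi_0$ is indeed projective and $\Gm$-equivariant, which is standard GIT.
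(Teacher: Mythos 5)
Your proof takes essentially the same route as the paper's. The paper sets $\te_0=0$ and uses the proper morphism $\pi\colon M^\sst_\te(Q,\al)\to M^\sst_{\te_0}(Q,\al)$ from King's GIT construction; since $M^\sst_{\te_0}(Q,\al)$ is precisely the affine quotient $\Spec\bigl(\lO(R(Q,\al))^{\GL_\al}\bigr)$ (every representation is $0$-semistable), this is literally your $\Spec(A)$, and the paper likewise concludes via properness after noting that the limit exists in $R(Q,\al)$ because all $\Gm$-weights are non-negative. You spell out the valuative criterion and the grading on the invariant ring a bit more explicitly, and your closing remark about the failure of the naive representative-level limit corresponds to the remark the paper makes immediately after this lemma, but the argument is the same.
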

\begin{proof}
Let $\te_0=0\in\cR^{Q_0}$. Then $M^\sst_{\te_0}(Q,\al)$ is affine and there exists a proper map $\pi:M^\sst_{\te}(Q,\al)\to M^\sst_{\te_0}(Q,\al)$ (see \cite{king_moduli}). For any $M\in M^\sst_{\te_0}(Q,\al)$ there exists the limit $\lim_{t\to0}tM$ (it exists already in $R(Q,\al)$). Now our statement follows from the properness of $\pi$. 
\end{proof}

\begin{rmr}
It is not true in general, that for any $M\in R^\sst_\te(Q,\al)\sb R(Q,\al)$ there exists the limit $\lim_{t\to0}tM$ in $R^\sst_\te(Q,\al)$. Therefore we don't formulate analogous statement for the moduli stacks of representations. For example, consider the quiver $Q$ with two vertices $1,2$ and one arrow $a:1\to2$. Let $\al=(1,1)$, $\te=(1,0)$ and let the action of $\cC^*$ be given by multiplication. Then $M^\sst_\te(Q,\al)$ consists of one representation $M=[\cC\arr^{1}\cC]$ and the limit $\lim_{t\to0}tM$ exists and coincides with $M$. On the other hand $R^\sst_\te(Q,\al)$ consists of representations $M_s=[\cC\arr^s\cC]$, $s\in\cC^*$, and the limit $\lim_{t\to0}tM_1=[\cC\arr^0\cC]$ is not contained in $R^\sst_\te(Q,\al)$.
\end{rmr}

\section{Motivic Donaldson-Thomas invariants}
Let $\al\in\cN^{Q_0}$ and let $\te\in\cR^{Q_0}$ be \al-generic. Then all representations in $M^\sst_\te(Q,\al)$ are stable and therefore $M^\sst_\te(Q,\al)$ is smooth.
Consider the trace of the potential (also for \te non-\al-generic) for the moduli space and the moduli stack
$$w:M^\sst_{\te}(Q,\al)\to\cC,$$
$$\tl w:\lM^\sst_{\te}(Q,\al)\to\cC.$$
We denote by $[\vi_w]$ the absolute motivic vanishing cycle of $w$ (see \eg \cite{behrend_motivic}). We can write the moduli stack $\lM^\sst_\al(Q,\al)$ as the global quotient stack $[M^\sst_\al(Q,\al)/\cC^*]$ with a trivial action of $\cC^*$.
%In the same way the moduli stack $\lM^\sst_\al(A_W,\al)$ is a global quotient stack
%$[M^\sst_\al(A_W,\al)/\cC^*]$.
We refer to \cite{behrend_motivic} for the definition of the motive of an algebraic stack. In particular, we have
$$[\lM^\sst_\al(Q,\al)]=\frac{[M^\sst_\al(Q,\al)]}{[\cC^*]}=\frac{[M^\sst_\al(Q,\al)]}{\cL-1}.$$
To avoid the definition of the motivic vanishing cycle for stacks, we just define $[\vi_{\tl w}]$ to be $\frac{[\vi_w]}{\cL-1}$. Following \cite[Definition 1.13]{behrend_motivic}, we formulate

\begin{dfn}
Let $\al\in\cN^{Q_0}$ and let $\te\in\cR^{Q_0}$ be $\al$-generic.
We define the virtual motive $A_{\al}^{\te}$ of the moduli stack $\lM^\sst_{\te}(A_W,\al)$ by the formula
%\begin{equation}
%A_{\al}^{\te}=-\cL^{-\oh\dim\lM^\sst_{\te}(Q,\al)}[\vi_{\tl w}].
%\end{equation}
$$A_{\al}^{\te}=-(-\cL^\oh)^{-\dim\lM^\sst_{\te}(Q,\al)}[\vi_{\tl w}].$$
\end{dfn}

\begin{rmr}
Note that
\begin{equation}
\dim\lM^\sst_{\te}(Q,\al)=\dim M^\sst_{\te}(Q,\al)-1=-T(\al).
\end{equation}
where $T$ is a Tits form of the quiver $Q$.
\end{rmr}

\begin{thr}
We have
$$[\vi_w]=[w\inv(1)]-[w\inv(0)].$$
\end{thr}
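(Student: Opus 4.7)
The plan is to recognize this identity as a direct application of Proposition~1.11 of \cite{behrend_motivic}, which asserts exactly the equality $[\vi_f]=[f\inv(1)]-[f\inv(0)]$ whenever $f:X\to\cC$ is a regular function on a smooth variety $X$ equipped with a $\cC^*$-action such that (i) $f$ is $\cC^*$-equivariant with respect to the standard multiplication character on the target $\cC$, and (ii) $\lim_{t\to0}tx$ exists for every $x\in X$. I would take $X=M^\sst_{\te}(Q,\al)$ and $f=w$ and verify these conditions one by one.

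Smoothness of $X$ is immediate from the $\al$-genericity of $\te$, which collapses the semistable locus onto the stable locus $M^\st_\te(Q,\al)$. The existence of the limit as $t\to0$ is exactly the content of Lemma~\ref{limit}. The only substantive step is manufacturing a $\cC^*$-action for which $w$ is equivariant through the \emph{identity} character of $\cC^*$: the naive weight-function action gives $w(tM)=t^{\wt W}w(M)$, whose character $t\mto t^{\wt W}$ is not in general primitive. I would resolve this by passing to the extended $T_\La$-action of Remark~\ref{primitive condition}, for which $w$ is equivariant with respect to the primitive character $\hi_\om$ (primitivity being exactly the standing hypothesis that $\om:\cZ\to\La$ is a split monomorphism), and then restricting along a cocharacter $\la:\cC^*\to T_\La$ with $\hi_\om\circ\la=\id_{\cC^*}$. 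Such a $\la$ exists by the splitting, and can be chosen with non-negative weights on all arrows of $Q$ (since the original $\wt$ already factors through $\La$ with non-negative values), so that the limit-existence property of Lemma~\ref{limit} is preserved for the restricted $\cC^*$-action.

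With all three hypotheses of \cite[Prop.~1.11]{behrend_motivic} verified for $(X,f)=(M^\sst_\te(Q,\al),w)$, the conclusion $[\vi_w]=[w\inv(1)]-[w\inv(0)]$ follows at once. I expect the most delicate point to be the simultaneous choice of $\la$ so that it both splits $\hi_\om$ (for primitivity) and produces non-negative weights on arrows (for circle compactness via Lemma~\ref{limit}). In the two main cases of interest this is automatic: for trivial potentials the condition is vacuous, and for potentials arising from consistent brane tilings one may take $\wt$ to come from a perfect matching, so that $\wt(W)=1$ and the weight-function action itself is already primitive.
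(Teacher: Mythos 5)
Your reading of the statement as an application of \cite[Prop.~1.11]{behrend_motivic}, with smoothness from $\al$-genericity and limits from Lemma~\ref{limit}, matches the paper exactly. The difference is in the equivariance step. You set out to manufacture a \emph{single} $\cC^*$-action for which $w$ is equivariant of weight one, by choosing a cocharacter $\la:\cC^*\to T_\La$ that simultaneously splits $\hi_\om$ and has non-negative weights on all arrows, and you correctly flag the existence of such a $\la$ as the delicate point. But this step is not needed: Proposition~1.11 of \cite{behrend_motivic} is stated for a torus $T$ acting on $(X,f)$ so that $f$ is $T$-equivariant with respect to a \emph{primitive character} $\hi:T\to\cC^*$, together with circle-compactness witnessed by \emph{some} one-parameter subgroup of $T$ --- the one-parameter subgroup providing the limits is not required to split $\hi$. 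The paper exploits exactly this: it feeds in the $T_\La$-equivariance with primitive character $\hi_\om$ from Remark~\ref{primitive condition} for one hypothesis, and the weight-function cocharacter $\la_{\wt}$ (for which $\hi_\om\circ\la_{\wt}=t^{\wt W}$, not the identity) together with Lemma~\ref{limit} for the other, and these two cocharacters need not coincide.

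Beyond being unnecessary, the parenthetical justification you offer for the existence of your $\la$ --- that ``the original $\wt$ already factors through $\La$ with non-negative values'' --- does not actually produce a splitting with non-negative weights; it only shows that \emph{some} non-negative vector in $\La^\vee$ pairs with $\om$ to give $\wt W$, which is $>1$ in general, not $1$. Whether the non-negative cone in $\La^\vee$ meets the affine hyperplane $\set{\la : \ang{\la,\om}=1}$ is exactly the issue you flag, and it is not obviously always soluble from the standing hypotheses. The torus version of Prop.~1.11 sidesteps this entirely, which is why the paper's proof is shorter and holds at the stated level of generality (not just for trivial potentials or perfect-matching weights where $\wt W=1$).
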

\begin{proof}
We are going to apply \cite[Prop.~1.11]{behrend_motivic}.
We have seen, that the moduli space $M^\sst_{\te}(Q,\al)$ admits a $\cC^*$-action such that
%the variety of \Gm-invariant points is projective and 
for any point $M\in M^\sst_{\te}(Q,\al)$ there exists the limit $\lim_{t\to0}tM$ (see Lemma \ref{limit}). Moreover, by our assumptions (see Remark \ref{primitive condition}), the map $w:M^\sst_{\te}(Q,\al)\to\cC$ is $T_\La$-equivariant, where the action of $T_\La$ on $\cC$ is given by the primitive character $\hi_w:T_\La\to\cC^*$. Thus, we can apply \cite[Prop.~1.11]{behrend_motivic} (it is noted there that the condition on the compactness of the $\cC^*$-invariant part can be dropped) and we get the statement of the theorem. 
\end{proof}

\begin{crl}
We have
%\begin{equation}
%A_{\al}^{\te}=\cL^{\oh T(\al)}\frac{[\lM^\sst_{\te}(Q,\al)]-\cL[\tl w\inv(0)]}{1-\cL}.
%\end{equation}
$$A_{\al}^{\te}=(-\cL^\oh)^{T(\al)}\frac{[\lM^\sst_{\te}(Q,\al)]-\cL[\tl w\inv(0)]}{1-\cL}.$$
\end{crl}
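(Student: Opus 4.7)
The plan is to substitute the formula from the theorem into the definition of $A^\te_\al$ and use the $\Gm$-equivariance of $w$ to eliminate $[w\inv(1)]$ in favour of $[M^\sst_\te(Q,\al)]$ and $[w\inv(0)]$.

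First, by the theorem $[\vi_w]=[w\inv(1)]-[w\inv(0)]$, and by the stack convention adopted just above, $[\vi_{\tl w}]=[\vi_w]/(\cL-1)$ while $[\tl w\inv(0)]=[w\inv(0)]/(\cL-1)$ and $[\lM^\sst_\te(Q,\al)]=[M^\sst_\te(Q,\al)]/(\cL-1)$. The splitness of $\om:\cZ\to\La$ in Remark \ref{primitive condition} gives a retraction $\wt:\La\to\cZ$, which dually produces a cocharacter $\Gm\emb T_\La$ whose composite with $\hi_\om$ is the identity character. Along this subgroup, $w$ becomes equivariant with respect to the standard scaling action of $\Gm$ on $\cC$; hence every fiber $w\inv(c)$ for $c\in\cC^*$ is isomorphic to $w\inv(1)$ via the action of $c$, and $w\inv(\cC^*)\iso\cC^*\times w\inv(1)$ as varieties. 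This yields
$$[w\inv(\cC^*)]=(\cL-1)[w\inv(1)].$$

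Combining with the disjoint decomposition $M^\sst_\te(Q,\al)=w\inv(0)\sqcup w\inv(\cC^*)$ gives
$$[w\inv(1)]=\frac{[M^\sst_\te(Q,\al)]-[w\inv(0)]}{\cL-1},$$
and substituting into $[\vi_w]=[w\inv(1)]-[w\inv(0)]$ produces
$$[\vi_w]=\frac{[M^\sst_\te(Q,\al)]-\cL[w\inv(0)]}{\cL-1}.$$
Dividing through by $\cL-1$ converts each variety class to the corresponding stack class, so
$$[\vi_{\tl w}]=\frac{[\lM^\sst_\te(Q,\al)]-\cL[\tl w\inv(0)]}{\cL-1}.$$

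Finally, feeding this into $A_\al^\te=-(-\cL^\oh)^{-\dim\lM^\sst_\te(Q,\al)}[\vi_{\tl w}]$ and using $\dim\lM^\sst_\te(Q,\al)=-T(\al)$ together with $-1/(\cL-1)=1/(1-\cL)$ yields the claimed formula. The only step requiring actual input beyond bookkeeping is the identification $w\inv(\cC^*)\iso\cC^*\times w\inv(1)$; this is where the primitivity of $\hi_\om$ is essential, since without a one-parameter cross-section of $\hi_\om$ inside $T_\La$ one cannot use the $T_\La$-action to trivialize $w$ over $\cC^*$. Everything else is a formal manipulation of motivic classes.
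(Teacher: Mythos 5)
Your proof is correct and follows the same route as the paper's. The paper's own proof simply asserts the intermediate identity $[w\inv(1)]-[w\inv(0)]=\frac{[M^\sst_{\te}(Q,\al)]-\cL[w\inv(0)]}{\cL-1}$ and then divides by $\cL-1$ to pass to stacks; what you have done is supply the justification the paper leaves implicit, namely that the split cocharacter $\Gm\emb T_\La$ coming from $\om$ being a split monomorphism trivializes $w$ over $\cC^*$, giving $[w\inv(\cC^*)]=(\cL-1)[w\inv(1)]$ and hence the decomposition $[M^\sst_\te(Q,\al)]=[w\inv(0)]+(\cL-1)[w\inv(1)]$. The rest of your manipulation (substituting into $[\vi_w]$, passing to stack classes, and using $\dim\lM^\sst_\te(Q,\al)=-T(\al)$ to absorb the sign) matches the paper exactly.
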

\begin{proof}
By the previous result
$$[\vi_w]
=[w\inv(1)]-[w\inv(0)]
=\frac{[M^\sst_{\te}(Q,\al)]-\cL[w\inv(0)]}{\cL-1}.$$
We use now
$$[\lM^\sst_{\te}(Q,\al)]=\frac{[M^\sst_{\te}(Q,\al)]}{\cL-1},\qquad 
[\tl w\inv(0)]=\frac{[w\inv(0)]}{\cL-1}.$$
\end{proof}

\begin{dfn}
For a not necessarily $\al$-generic stability parameter $\te$, we define the virtual motive $A_{\al}^{\te}$ of the moduli stack $\lM^\sst_{\te}(A_W,\al)$ to be
\begin{equation}
A_{\al}^{\te}=(-\cL^\oh)^{T(\al)}\frac{[\lM^\sst_{\te}(Q,\al)]-\cL[\tl w\inv(0)]}{1-\cL}.
\label{general def}
\end{equation}
\end{dfn}
For any $\mu\in\cR$, we define the motivic Donaldson-Thomas series (they are elements of the motivic quantum torus, see the next section)
$$A_\mu^\te=\sum_{\mu_\te(\al)=\mu}A_\al^\te x^\al.$$

%For not necessarily $\al$-generic $\te$, we define 
%the virtual motive of $M^\sst_{\te}(A'_W,\al)$ by the formula
%$$A_{\al}:=-\cL^{T(\al)}([w\inv(1)]-[w\inv(0)]).$$
Note that Equation \eqref{general def} can be interpreted even over finite fields. We will use this fact in order to work with Hall algebras over finite fields. This is done just to make the exposition more clear. The generalization of our results to motivic Hall algebras is straightforward.

\section{Equivariant Hall algebra}
%In this section we will assume that $k=\cF_q$ is a finite field such that $q$ and $\wt W$ are coprime. 
Let $H$ be the Hall algebra of the category of representations of the quiver $Q$ over a finite field $k=\cF_q$ (we use the conventions of \cite{kontsevich_stability} fot the multiplication and this gives an algebra opposite to the usual Ringel-Hall algebra).
The basis of $H$ as a vector space consists of all isomorphism classes of representations of $Q$ over $\cF_q$. Multiplication is given by the rule
$$[N]\circ[M]=\sum_{[X]}F_{MN}^X[X],$$
where
$$F_{MN}^X=\#\sets{U\sb X}{U\iso N,\ X/U\iso M}.$$
Let $\eq\sb H$ be a subalgebra consisting of elements $f=\sum a_M[M]$ such that $a_{tM}=a_M$ for any $t\in k^*$. We call $\eq$ the equivariant Hall algebra. For any $f=\sum a_M[M]\in \eq$, we define $f_0=\sum_{w(M)=0}a_M[M]$. The algebras $H,\eq$ are graded by the dimension of representations. We denote by $\what H,\heq$ the corresponding completions.

Let $A_q$ be the quantum torus of the quiver $Q$. As a vector space it is $$\cQ(q^\oh)\pser{x_1,\dots,x_r},$$
where $r=\# Q_0$. Multiplication is given by
$$x^\al\circ x^\be=(-q^\oh)^{\ang{\al,\be}}x^{\al+\be},$$
where $\ang{-,-}$ is the skew-symmetric form of the quiver $Q$, see Section \ref{bilin}.
It was shown by Markus Reineke \cite{reineke_counting} that there exists an algebra homomorphism
\begin{equation}
I:\what H\to A_q,\qquad [M]\mto\frac{(-q^\oh)^{T(\lb\dim M)}}{\#\Aut M}x^{\lb\dim M}.
\label{eq:int map}
\end{equation}

\begin{rmr}
Similarly, one can define the motivic Hall algebra of representations over $Q$ (see \eg \cite{joyce_configurationsa,kontsevich_stability,bridgeland_introduction}) and the quantum torus over the Grothendieck ring of the category of Chow motives \cite{kontsevich_stability}, where multiplication is given by
$$x^{\al}\circ x^\be=(-\cL^\oh)^{\ang{\al,\be}}x^{\al+\be}.$$
There is an algebra homomorphism from the motivic Hall algebra to the motivic quantum torus similar to the above map. All the statements of this section can be proved in the motivic setting without any additional effort.
\end{rmr}

Let $\tl w:\lM_{\te}^\sst(Q,\al)\to k$ be the trace of the potential.
The invariants
$$A_{\al}^{\te}=(-\cL^\oh)^{T(\al)}\frac{[\lM^\sst_{\te}(Q,\al)]-\cL[\tl w\inv(0)]}{1-\cL}.$$
defined earlier, can be also defined over a finite field $\cF_q$. The point count of the stack $\lM_{\te}^\sst(Q,\al)$ (multiplied by $(-q^\oh)^{T(\al)}$) corresponds to $I(\tl A_{\al}^{\te})$, where
$$\tl A_{\al}^{\te}=\sum_{\over{M\text{ is }\te-sst}{\lb\dim M=\al}}[M]\in \heq.$$
The point count of $\tl w\inv(0)$ (multiplied by $(-q^\oh)^{T(\al)}$) corresponds to $I((\tl A_{\al}^{\te})_0)$
(recall that for $f=\sum a_M[M]$, we define $f_0=\sum_{w(M)=0}a_M[M]$). Therefore, over a finite field $\cF_q$, we define
$$A_{\al}^{\te}x^\al=\frac{I(\tl A_{\al}^{\te})-qI((\tl A_{\al}^{\te})_0)}{1-q}.$$

\begin{prp}
The map $\Ieq:\heq\to A_q$
$$\Ieq(f)=\frac{I(f)-q I(f_0)}{1-q}$$
is an algebra homomorphism.
\end{prp}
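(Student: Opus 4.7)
The plan is to find a more transparent form for $\Ieq$ that makes multiplicativity manifest, and then verify it by direct computation using Reineke's algebra homomorphism $I$ together with the $k^*$-equivariance hypothesis. The key observation I would exploit is that equivariance lets us replace the awkward $w=0$ versus $w\ne 0$ dichotomy by a clean two-term decomposition on which the Hall product acts predictably.

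First, I would decompose any $f\in\eq$ by the value of $w$: write $f=\sum_{c\in k}f_c$ with $f_c=\sum_{w(M)=c}a_M[M]$. Since the $k^*$-action $M\mapsto tM$ preserves dimension vector and automorphism group, $I$ is $k^*$-invariant; combined with the equivariance condition $a_{tM}=a_M$ and the weight formula $w(tM)=t^{\wt W}w(M)$, this forces $I(f_c)$ to depend only on the $k^*$-orbit of $c$ under $t\cdot c=t^{\wt W}c$. This action is transitive on $k\setminus\{0\}$ in the motivic setting (where the fibers $w\inv(c)$ for $c\ne 0$ are related by the $\Gm$-action and share a common motivic class) and can be arranged over $\cF_q$ by requiring $\gcd(\wt W,q-1)=1$. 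Under this transitivity I would set $I_1(f):=I(f_c)$ for any $c\ne 0$, so that $I(f)=I(f_0)+(q-1)I_1(f)$; rearranging then yields $\Ieq(f)=I(f_0)-I_1(f)$.

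With this reformulation, multiplicativity reduces to explicit rules for $I_0(f):=I(f_0)$ and $I_1$ under the Hall product. The compatibility $(f\circ g)_c=\sum_{c_1+c_2=c}f_{c_1}\circ g_{c_2}$ together with the algebra homomorphism property of $I$ and the identification $I(f_c)=I_1(f)$ for $c\ne 0$ give, by counting how many nonzero $c_1$ contribute in each sum,
\[
I((f\circ g)_0)=I_0(f)I_0(g)+(q-1)I_1(f)I_1(g),
\]
\[
I((f\circ g)_1)=I_0(f)I_1(g)+I_1(f)I_0(g)+(q-2)I_1(f)I_1(g).
\]

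Finally, subtracting these two identities, the coefficient of $I_1(f)I_1(g)$ collapses to $(q-1)-(q-2)=1$ and the right-hand side factors as $(I_0(f)-I_1(f))(I_0(g)-I_1(g))=\Ieq(f)\Ieq(g)$, proving $\Ieq(f\circ g)=\Ieq(f)\Ieq(g)$. The only real obstacle is the orbit-transitivity step in the first part; as the preceding remark on motivic Hall algebras suggests, the cleanest route is to carry out the whole argument in the motivic setting, where transitivity is automatic and the finite-field statement of the proposition follows by specialization.
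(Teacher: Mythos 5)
Your argument is correct and rests on the same two ingredients as the paper's proof: the additivity of $w$ on exact sequences (giving $(f\circ g)_c=\sum_{c_1+c_2=c}f_{c_1}\circ g_{c_2}$), and the fact that $I(f_c)$ is independent of $c\ne 0$ for $f\in\eq$. The paper expresses the latter as $I(f_t)=\frac{I(f)-I(f_0)}{q-1}$ for $t\in k^*$ and then expands $\Ieq(fg)$ directly from the definition; you instead name $I_1(f):=I(f_c)$ (for $c\ne 0$), derive the cleaner identity $\Ieq(f)=I(f_0)-I_1(f)$, and compute both $I((fg)_0)$ and $I((fg)_1)$ before subtracting, which makes the factorization immediate. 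This is the same route with a tidier bookkeeping. You also correctly flag a point the paper passes over silently: the constancy of $I(f_c)$ for $c\ne 0$ only follows from $a_{tM}=a_M$ and $w(tM)=t^{\wt W}w(M)$ when the action $c\mapsto t^{\wt W}c$ of $k^*$ on $k^*$ is transitive, which over $\cF_q$ requires $\gcd(\wt W,q-1)=1$; over $\cC$, with $\Gm\to\Gm$, $t\mapsto t^{\wt W}$ surjective, the issue disappears. This is consistent with the paper's stated policy that the finite-field exposition is only a stand-in for the motivic version, but your explicit caveat is worth keeping.
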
 
\begin{proof}
For any $f\in \eq$, $t\in k^*$ we have
$$I(f_t)=\frac{I(f)-I(f_0)}{q-1}.$$
The map $w$ is additive with respect to exact sequences.
Therefore
$$(fg)_0=\sum_{t\in k}f_t g_{-t}.$$
Let $F=I(f),F_0=I(f_0),G=I(g),G_0=I(g_0)$. Then
$$I((fg)_0)=\sum_{t\in k}I(f_t)I(g_{-t})
=\frac{(F-F_0)(G-G_0)}{q-1}+F_0G_0.$$
Therefore
\begin{align*}
\Ieq(fg)=&
\frac{I(fg)-qI((fg)_0)}{1-q}\\
=&\frac{(q-1)FG-q(F-F_0)(G-G_0)-q(q-1)F_0G_0}{-(q-1)^2}\\
=&\frac{-FG+q(FG_0+F_0G)-q^2F_0G_0}{-(q-1)^2}\\
=&\frac{(F-qF_0)(G-qG_0)}{(q-1)^2}=\Ieq(f)\Ieq(g).
\end{align*}
\end{proof}

\begin{rmr}
It follows from the above discussion that 
$$A_{\al}^{\te}x^\al=\Ieq(\tl A_{\al}^{\te}).$$
\end{rmr}

\begin{rmr}
Note that if $f=f_0$, then $\Ieq(f)=I(f)$. This implies that the unit element is sent to the unit element by $\Ieq$. This implies also that for a trivial potential we have $\Ieq=I$.
\end{rmr}

\begin{dfn}
For any $\mu\in\cR$, we define
$$\tl A_\mu^{\te}=\sum_{\mu_{\te}(\al)=\mu}\tl A_{\al}^{\te}
=\sum_{\over{M\text{ is }\te-sst}{\lb\dim M=\al}}[M]\in \heq.$$
We define the motivic Donaldson-Thomas series
$$A_\mu^{\te}=\Ieq(\tl A_\mu^{\te})=\sum_{\mu_{\te}(\al)=\mu}A_{\al}^{\te}x^\al.$$
For $\te=0$ and $\mu=0$, we denote $\tl A_\mu^\te$ (resp.\ $A_\mu^\te$ and $A^\te_\al$) just by $\tl A$ (resp.\ $A$ and $A_\al$).
%and we denote $\tl A_\al^\te$ (resp.\ $A_\al^\te$) by $\tl A_\al$ (resp.\ $A_\al$). 
\end{dfn}

It follows from the Harder-Narasimhan filtration for $Q$-representations that, for any $\te\in\cR^{Q_0}$, we have
$$\tl A=\prod^{\leftarrow}_{\mu}\tl A_{\mu}^{\te}\in\heq,$$
where the product is taken in the decreasing order of $\mu\in\cR$.
Applying the associative map $\Ieq$, we obtain an analogous statement in the quantum torus.
% This is the wall-crossing formula for the motivic DT series

\begin{thr}[Harder-Narasimhan relation]
For any $\te\in\cR^{Q_0}$, we have
$$A=\prod^{\leftarrow}_{\mu}A_{\mu}^{\te},$$
where the product is taken in the decreasing order of $\mu\in\cR$.
\end{thr}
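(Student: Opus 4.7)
The plan is to deduce the statement directly from the Harder-Narasimhan identity in the equivariant Hall algebra via the integration map $\Ieq$, exploiting the fact (already established) that $\Ieq$ is an algebra homomorphism.

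First I would recall why the Harder-Narasimhan identity
$$\tl A=\prod^{\leftarrow}_{\mu}\tl A_{\mu}^{\te}$$
holds in $\heq$. Every $Q$-representation $M$ admits a unique HN filtration $0=M_0\sb M_1\sb\dots\sb M_n=M$ with respect to $\te$, whose subquotients $N_k=M_k/M_{k-1}$ are $\te$-semistable with strictly decreasing slopes $\mu_\te(N_1)>\dots>\mu_\te(N_n)$. Rewriting the Hall product $[N_n]\circ\dots\circ[N_1]$ using the structure constants $F^X_{MN}$, the contributions from all ordered tuples of semistables with strictly decreasing slopes precisely enumerate the HN filtrations of every representation with multiplicity one, which is $\tl A$. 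One must only check that (i) the expression lies in $\heq$ rather than just $\what H$, which follows because the potential trace $w$ is additive in exact sequences so each $\tl A^\te_\mu$ is equivariant, and (ii) the product converges in the completion, which is standard since only finitely many factorizations contribute in each dimension.

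Next I would pass to the quantum torus via $\Ieq$. Since $\Ieq\colon\heq\to A_q$ is an algebra homomorphism (by the previous proposition) and both $\heq$ and $A_q$ are graded-complete, $\Ieq$ extends to the completions and commutes with the (locally finite) product over slopes. Applying $\Ieq$ to the HN identity thus yields
$$\Ieq(\tl A)=\prod^{\leftarrow}_{\mu}\Ieq(\tl A_{\mu}^{\te}).$$
By the definition of the motivic DT series, the right-hand side equals $\prod^{\leftarrow}_{\mu}A_{\mu}^{\te}$. For the left-hand side, observe that $\tl A$ is by definition $\tl A_0^0$ (taking $\te=0$ forces $\mu_\te(\al)=0$ for all $\al$, so every representation is $0$-semistable of slope $0$), whence $\Ieq(\tl A)=A_0^0=A$.

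The main conceptual obstacle was already absorbed into the earlier proposition, namely proving that $\Ieq$ is multiplicative in the presence of the potential; once that is in hand the theorem is essentially a formal corollary of the Hall-algebraic HN relation. The only residual point requiring care is the compatibility of $\Ieq$ with the completion and with the infinite ordered product, but this is routine since in each fixed dimension vector the product involves only finitely many nontrivial factors.
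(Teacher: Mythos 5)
Your proof is correct and follows exactly the same route as the paper: establish the Harder-Narasimhan factorization $\tl A=\prod^{\leftarrow}_{\mu}\tl A_{\mu}^{\te}$ in the completed equivariant Hall algebra, then apply the algebra homomorphism $\Ieq$ to transfer it to the quantum torus. The paper treats this as immediate once $\Ieq$ is known to be multiplicative, and your elaboration of the HN step and the convergence issues is consistent with that.
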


This recursion formula can be solved using the approach of Markus Reineke \cite{reineke_harder-narasimhan} (see also \cite[Theorem 3.2]{mozgovoy_poincare}).

\begin{thr}
For any $\te\in\cR^{Q_0}$, we have
$$A^\te_\al=\sum_{(\al_1,\dots,\al_k)}(-1)^{k-1}(-q^\oh)^{\sum_{i<j}\ang{\al_i,\al_j}}\prod_{i=1}^k A_{\al_i}.$$
where the sum runs over all tuples $(\al_1,\dots,\al_k)$ of vectors in $\cN^{Q_0}\ms\set0$ such that $\sum_{i=1}^k\al_i=\al$ and $\mu(\sum_{i=1}^j\al_i)>\mu(\al)$ for any $1\le j<k$.
\end{thr}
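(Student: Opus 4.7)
The plan is to invert the Harder--Narasimhan relation $A=\prod^{\leftarrow}_{\mu}A^\te_\mu$ from the preceding theorem, following Reineke's strategy. First, I expand this product in the $x^\al$-component of the completed quantum torus. Writing $A^\te_\mu=1+\sum_{\be\ne0,\ \mu(\be)=\mu}A^\te_\be x^\be$ and repeatedly applying the multiplication rule $x^\be\circ x^\ga=(-q^\oh)^{\ang{\be,\ga}}x^{\be+\ga}$ gives
$$A_\al=\sum_{k\ge1}\sum_{(\be_1,\dots,\be_k)}(-q^\oh)^{\sum_{i<j}\ang{\be_i,\be_j}}\prod_{i=1}^k A^\te_{\be_i},$$
where the inner sum ranges over tuples of nonzero dimension vectors with $\sum_i\be_i=\al$ and strictly decreasing slopes $\mu(\be_1)>\dots>\mu(\be_k)$. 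This is a triangular system: the $k=1$ term is $A^\te_\al$ itself, while every $k\ge2$ term involves only $A^\te_\be$ with $|\be|<|\al|$.

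Next, I solve this recursion by strong induction on $|\al|$, assuming the claimed closed form for all $\be$ with $|\be|<|\al|$. Substituting the inductive expressions for each $A^\te_{\be_i}$ ($k\ge2$) produces an iterated sum indexed by pairs consisting of a decreasing-slope decomposition $(\be_1,\dots,\be_k)$ of $\al$ together with, for each $\be_i$, a refinement satisfying the partial-sum condition at the $\be_i$ level. Concatenating these refinements yields a single decomposition $(\al_1,\dots,\al_n)$ of $\al$. A routine bookkeeping shows that the bilinear twist $(-q^\oh)^{\ang{-,-}}$ behaves additively under concatenation, so that the accumulated twist is exactly $(-q^\oh)^{\sum_{i<j}\ang{\al_i,\al_j}}$ independent of the chosen intermediate decreasing-slope grouping.

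The combinatorial heart of the argument — and the main obstacle — is the Möbius-type identity that one is left with after the twist factors are stripped off: for each decomposition $(\al_1,\dots,\al_n)$ summing to $\al$, the signed sum of $\prod_i(-1)^{k_i-1}$ over all ways of grouping the $\al_i$'s into consecutive decreasing-slope blocks with the partial-sum condition satisfied inside each block must equal $(-1)^{n-1}$ when $\mu(\al_1+\dots+\al_j)>\mu(\al)$ for every $j<n$, and $0$ otherwise. This cancellation is exactly Reineke's inversion identity from \cite{reineke_harder-narasimhan}; it is proved by induction on $n$, isolating the maximal index $j$ at which $\mu(\al_1+\dots+\al_j)$ attains its maximum and using the slope-linearity to pair off contributions with and without a cut at $j$. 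Combining this identity with the twist calculation above produces the stated formula, in parallel with \cite[Theorem 3.2]{mozgovoy_poincare}.
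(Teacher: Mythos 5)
Your proposal is correct and follows essentially the same route as the paper: invert the Harder--Narasimhan relation $A=\prod^{\leftarrow}_\mu A^\te_\mu$ using Reineke's inversion identity, then translate the quantum-torus products into the $(-q^{1/2})^{\sum_{i<j}\ang{\al_i,\al_j}}$ twist. The only difference is that the paper invokes the inversion as a black box (citing \cite[Theorem 3.2]{mozgovoy_poincare}, applied directly to products in $A_q$ before expanding the twist factors), whereas you first expand the twists and then sketch the underlying M\"obius-type cancellation; both are valid and logically equivalent.
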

\begin{proof}
According to the previous theorem, for any $\al\in\cN^{Q_0}\ms\set0$, we can write
$$A_\al x^\al=\sum_{(\al_1,\dots,\al_k)}(A^\te_{\al_1}x^{\al_1})\circ\dots\circ(A^\te_{\al_k}x^{\al_k}),$$
where the sum runs over all tuples $(\al_1,\dots,\al_k)$ of vectors in $\cN^{Q_0}\ms\set0$ such that $\sum_{i=1}^k\al_i=\al$ and $\mu(\al_1)>\dots>\mu(\al_k)$. Applying \cite[Theorem 3.2]{mozgovoy_poincare} we deduce that
$$A^\te_\al x^\al=\sum_{(\al_1,\dots,\al_k)}(-1)^{k-1}(A_{\al_1}x^{\al_1})\circ\dots\circ(A_{\al_k}x^{\al_k}),$$
where the sum runs over all tuples $(\al_1,\dots,\al_k)$ of vectors in $\cN^{Q_0}\ms\set0$ such that $\sum_{i=1}^k\al_i=\al$ and $\mu(\sum_{i=1}^j\al_i)>\mu(\al)$ for any $1\le j<k$.
The statement of the theorem follows now from the definition of the multiplication in the quantum torus~$A_q$.
\end{proof}

\begin{rmr}
It follows from the above results that if $A_\al$ are rational functions in $q^\oh$, for $\al\in\cN^{Q_0}$, then so are also $A^\te_\al$ for any stability parameter $\te\in\cR^{Q_0}$.
\end{rmr}

\section{Quivers with arbitrary potentials}
As we explained earlier, the results of the previous section can be proved also in the motivic setting. In this section, however, we will work only with quiver representations over finite fields.
Let $\cF_q$ be some finite field.
We extend scalars in the quantum torus and define
$$A_q=\cC\pser{x_1,\dots,x_r}$$
with multiplication $x^\al\circ x^\be=(-q^\oh)^{\ang{\al,\be}}x^{\al+\be}$.

Let $\psi:\cF_q\to\cC^*$ be some non-trivial character.
Define the map 
$$I_\psi:\what H\to A_q,\qquad [M]\mto \psi (w(M))I(M),$$
where the map $I:\what H\to A_q$ was defined in \eqref{eq:int map}.

\begin{lmm}
The map $I_\psi:\what H\to A_q$ is an algebra homomorphism.
\end{lmm}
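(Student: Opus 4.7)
The plan is to verify the homomorphism property directly on generators, combining three ingredients: Reineke's map $I$ is already an algebra homomorphism, the potential trace $w$ is additive on short exact sequences (noted in Section~2 when $w:K_0(\Rep(Q,k))\to k$ was introduced), and $\psi$ is a group character so it turns the additive relation for $w$ into a multiplicative one. Because the coefficients $\psi(w(M))\in\cC^*$ are central in $A_q$ (scalars were extended to $\cC$ at the start of this section), all factors of the form $\psi(w(\cdot))$ commute past the twisted product $x^\al\circ x^\be$.

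Concretely, I would compute
\begin{align*}
I_\psi([N])\circ I_\psi([M])
&=\psi(w(N))\psi(w(M))\,\bigl(I([N])\circ I([M])\bigr)\\
&=\psi\bigl(w(N)+w(M)\bigr)\sum_{[X]}F_{MN}^X\,I([X]),
\end{align*}
using that $I$ is a homomorphism together with the product formula $[N]\circ[M]=\sum_{[X]}F_{MN}^X[X]$ in $H$. Now for every $[X]$ with $F_{MN}^X\ne 0$, by the definition of $F_{MN}^X$ there exists a submodule $U\sbe X$ with $U\iso N$ and $X/U\iso M$, hence a short exact sequence $0\to N\to X\to M\to 0$; additivity of $w$ gives $w(X)=w(N)+w(M)$. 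Therefore $\psi(w(N)+w(M))=\psi(w(X))$ whenever the coefficient is nonzero, and one may move this factor inside the sum to obtain
$$\sum_{[X]}F_{MN}^X\,\psi(w(X))\,I([X])
=I_\psi\bigl([N]\circ[M]\bigr),$$
which is exactly what we wanted.

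Since $\what H$ is the completion of the graded Hall algebra and $I_\psi$ is defined termwise from $I$ and $\psi\circ w$, the identity extends from basis elements to all of $\what H$ by bilinearity and continuity. No real obstacle is anticipated; the only thing to keep an eye on is that $\psi$ must appear composed with $w$ in such a way that the relation $w(X)=w(N)+w(M)$ is invoked only on $X$ that actually contribute to the Hall product, which is automatic from the definition of $F_{MN}^X$.
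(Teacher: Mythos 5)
Your proof is correct and is essentially the same argument as the paper's: both rely on the same three ingredients (Reineke's $I$ being a homomorphism, additivity of $w$ on short exact sequences so that $w(X)=w(N)+w(M)$ whenever $F_{MN}^X\ne 0$, and $\psi$ turning that additivity into multiplicativity). The paper simply runs the chain of equalities in the opposite direction, starting from $I_\psi([N]\circ[M])$ and pulling the common scalar $\psi(w(M))\psi(w(N))$ out of the sum, whereas you start from $I_\psi([N])\circ I_\psi([M])$ and push it back in.
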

\begin{proof}
The map $w$ is additive with respect to exact sequences.
Therefore
\begin{align*}
I_\psi([N]\circ [M])=&I_\psi\left(\sum F_{MN}^X X\right)\\
=&\psi w(M)\psi w(N)I\left(\sum F_{MN}^X[X]\right)\\
=&\psi w(M)\psi w(N)I(N)I(M)
=I_\psi(N)I_\psi(M).
\end{align*}
\end{proof}

Recall that in the previous section we have used the weight function $\wt:Q_1\to\cZ$ in order to define the action of $\Gm$ on the quiver representations and to define the equivariant Hall algebra $\eq\sb H$.

\begin{lmm}
For any $f\in \heq$, we have $\Ieq(f)=I_\psi(f)$.
\end{lmm}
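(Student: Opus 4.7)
The plan is to decompose $f$ by the fibers of the potential $w$ and use the orthogonality of the nontrivial character $\psi$ to match the formula defining $\Ieq$. Write $f = \sum_{s\in k} f_s$ with $f_s = \sum_{[M]:\,w(M)=s} a_M[M]$, so the $s=0$ component agrees with the $f_0$ appearing in the definition of $\Ieq$. By linearity of $I_\psi$,
$$I_\psi(f) = \sum_{s\in k}\psi(s)\,I(f_s) = I(f_0) + \sum_{s\in k^*}\psi(s)\,I(f_s).$$

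The crucial input is that, for $f\in\heq$, the element $I(f_s)\in A_q$ is independent of $s\in k^*$; this is the same observation used (implicitly) in the proof of the preceding proposition. The reason is that the $k^*$-action $[M]\mapsto[tM]$ is a bijection on isomorphism classes preserving $\lb\dim M$ and $|\Aut M|$, and by equivariance $a_{tM}=a_M$ it preserves the coefficients as well, while sending $\{w=s\}$ onto $\{w=t^{\wt W}s\}$. Since the formula for $I$ depends only on these invariants, this yields $I(f_s)=I(f_{t^{\wt W}s})$ for every $t\in k^*$, which together with the decomposition $I(f) = I(f_0) + \sum_{s\in k^*}I(f_s)$ forces the common value to be $(I(f)-I(f_0))/(q-1)$.

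Finally, since $\psi$ is a nontrivial additive character, $\sum_{s\in k}\psi(s)=0$ and hence $\sum_{s\in k^*}\psi(s)=-1$. Substituting these two facts into the first display gives
$$I_\psi(f) = I(f_0) - \frac{I(f)-I(f_0)}{q-1} = \frac{qI(f_0)-I(f)}{q-1} = \frac{I(f)-qI(f_0)}{1-q} = \Ieq(f),$$
which is the claim. There is no serious obstacle: the only nontrivial ingredient is the $k^*$-invariance of $I(f_s)$, which is inherited from the previous proof; the remainder is character-sum bookkeeping.
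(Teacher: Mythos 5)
Your proof is correct, and it is essentially the argument the paper gives, just packaged differently: the paper reduces by linearity to the $k^*$-orbit sums $f=\sum_{t\in k^*}[tM_0]$ (with $w(M_0)\neq 0$) and to $f=[M_0]$ (with $w(M_0)=0$) and then evaluates $I_\psi$ directly via $\sum_{t\in k^*}\psi(w(tM_0))=-1$, whereas you decompose a general $f\in\heq$ by fibers of $w$ and invoke the identity $I(f_s)=\tfrac{I(f)-I(f_0)}{q-1}$ for $s\in k^*$ — the same fact already used in the proof of the preceding proposition — before applying the character sum. Both arguments hinge on the same two ingredients (the $k^*$-action permuting the nonzero fibers of $w$ while preserving $\lb\dim$, $\Aut$, and the coefficients, and $\sum_{s\in k^*}\psi(s)=-1$), so the two proofs are interchangeable; your version has the small advantage of making the reuse of the earlier lemma explicit and of avoiding the case split on $w(M_0)$.
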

\begin{proof}
It is enough to prove the statement for $f=\sum_{t\in\cF_q^*}[tM_0]$, where $w(M_0)\ne0$, and for $f=[M_0]$, where $w(M_0)=0$.
In the first case we have
$$I_\psi(M)
=\sum_{t\in\cF_q^*}\psi w(tM_0)I(M_0)
=\sum_{t\in\cF_q^*}\psi(t)I(M_0)=-I(M_0)=\frac{I(f)}{1-q}=\Ieq(f).$$
In the second case we have
$$I_\psi(f)=I(M_0)=\Ieq(f).$$
\end{proof}
This lemma means that instead of using the homomorphism $\Ieq:\heq\to A_q$ for the definition of Donaldson-Thomas series, we can use the homomorphism $I_\psi:\what H\to A_q$. While the homomorphism $\Ieq$ depends on the weight function $\wt:Q_1\to\cZ$ (more precisely, its domain $\heq$ depends on $\wt$), the homomorphism $I_\psi$ depends only on the character $\psi$ and its domain is the whole Hall algebra $\what H$.
We can use $I_\psi$ to define the Donaldson-Thomas series for arbitrary potentials, as this approach does not require a weight function $\wt:Q_1\to\cZ$ which is compatible with the potential.

\bibliography{../tex/papers}

\providecommand{\bysame}{\leavevmode\hbox to3em{\hrulefill}\thinspace}
\providecommand{\href}[2]{#2}
\begin{thebibliography}{10}

\bibitem{behrend_motivic}
Kai Behrend, Jim Bryan, and Balazs Szendroi, \emph{{M}otivic degree zero
  {Donaldson-Thomas} invariants}, 2009, \eprint{arxiv}{0909.5088}.

\bibitem{bridgeland_introduction}
Tom Bridgeland, \emph{{A}n introduction to motivic {H}all algebras}, 2010,
  \eprint{arxiv}{1002.4372}.

\bibitem{joyce_configurationsa}
Dominic Joyce, \emph{{C}onfigurations in abelian categories. {II}.
  {R}ingel-{H}all algebras}, Adv. Math. \textbf{210} (2007), no.~2, 635--706,
  \eprint{arxiv}{math.AG/0503029}.

\bibitem{king_moduli}
A.~D. King, \emph{{M}oduli of representations of finite-dimensional algebras},
  Quart. J. Math. Oxford Ser. (2) \textbf{45} (1994), no.~180, 515--530.

\bibitem{kontsevich_stability}
Maxim Kontsevich and Yan Soibelman, \emph{{S}tability structures, motivic
  {D}onaldson-{T}homas invariants and cluster transformations}, 2008,
  \eprint{arxiv}{0811.2435}.

\bibitem{kontsevich_cohomological}
\bysame, \emph{{C}ohomological {H}all algebra, exponential {H}odge structures
  and motivic {D}onaldson-{T}homas invariants}, 2010,
  \eprint{arxiv}{1006.2706}.

\bibitem{mozgovoy_poincare}
Sergey Mozgovoy, \emph{{P}oincar\'e polynomials of moduli spaces of stable
  bundles over curves}, Manuscripta Math. \textbf{131} (2010), no.~1-2, 63--86,
  \eprint{arxiv}{0711.0634}.

\bibitem{mozgovoy_motivic}
\bysame, \emph{{M}otivic {D}onaldson-{T}homas invariants and {K}ac conjecture},
  2011, \eprint{arxiv}{1103.2100}.

\bibitem{mozgovoy_noncommutative}
Sergey Mozgovoy and Markus Reineke, \emph{{O}n the noncommutative
  {D}onaldson-{T}homas invariants arising from brane tilings}, Adv. Math.
  \textbf{223} (2010), no.~5, 1521--1544, \eprint{arxiv}{0809.0117}.

\bibitem{nagao_motivic}
Kentaro Nagao, \emph{{W}all-crossing of the motivic {D}onaldson-{T}homas
  invariants}, Preprint.

\bibitem{reineke_harder-narasimhan}
Markus Reineke, \emph{{T}he {H}arder-{N}arasimhan system in quantum groups and
  cohomology of quiver moduli}, Invent. Math. \textbf{152} (2003), no.~2,
  349--368, \eprint{arxiv}{math/0204059}.

\bibitem{reineke_counting}
\bysame, \emph{{C}ounting rational points of quiver moduli}, Int. Math. Res.
  Not. \textbf{17} (2006), ID 70456, \eprint{arxiv}{math/0505389}.

\bibitem{reineke_localization}
\bysame, \emph{{L}ocalization in quiver moduli}, J. Reine Angew. Math.
  \textbf{631} (2009), 59--83, \eprint{arxiv}{math/0509361}.

\end{thebibliography}
\bibliographystyle{../tex/hamsplain}

\end{document}